\theoremstyle{plain}
\newtheorem{theorem}{Theorem}[section]
\newtheorem{lemma}[theorem]{Lemma}
\newtheorem{corollary}[theorem]{Corollary}
\newtheorem{assumption}{Assumption}
\theoremstyle{definition}
\newtheorem{definition}{Definition}[section]
\theoremstyle{remark}
\newtheorem{remark}[theorem]{Remark}
\title{Essential Stationary Equilibria of Mean Field Games with Finite State and Action Space}
\author{Berenice Anne Neumann\thanks{University of Trier, Department IV, Universitätsring 19, 54296 Trier, Germany}}
\begin{document}
	
	\maketitle
	
	\begin{abstract}
 		Mean field games allow to describe tractable models of dynamic games with a continuum of players, explicit interaction and heterogeneous states. Thus, these models are of great interest for socio-economic applications. A particular class of these models are games with finite state and action space, for which recently in \citet{NeumannComputation} a semi-explicit representation of all stationary equilibria has been obtained. In this paper we investigate whether these stationary equilibria are stable against model perturbations.
 		We prove that the set of all games with only essential equilibria is residual and obtain two characterization results for essential stationary equilibria.
	\end{abstract}
	
	\textbf{Keywords:} mean field game, essential equilibrium, stationary equilibrium, finite state space, finite action space
	
	\textbf{JEL Classifications:} C73, C72, C62 
	
	\section{Introduction}
	
	Mean field games have been introduced independently by \citet{LasryJapanese2007} and \citet{HuangNCE2006} as a game theoretic model for stochastic games in continuous time with a continuum of players.
	The main feature of these games is that the players do not observe the other players' behaviour individually, but only its distribution.
	These games allow for tractable models of the interaction of a continuum of players with explicit interaction (in contrast to the classical assumption in general equilibrium theory that ``prices mediate all social interaction'') as well as heterogeneous states (in contrast to representative agent models).
	
	This led to a large variety of economic applications (see \citet{GomesEconomicModelsMFG2015,ParisPrinceton2010, CainesHandbook}). 
	In particular, applications with finite state and action space where the dynamics of the individual player are given by a continuous time Markov chain have been considered.
	These include \citet{KolokoltsovBotnet2016}, \citet{KolokoltsovCorruption2017}, \citet{GueantDiss2009} as well as \citet{BesancenotParadigm2015} and the focus in all these applications lied in the analysis of stationary equilibria.
	Recently, also a formal model with finite state and action space has been introduced in \citet{DoncelPaper}, where existence of dynamic equilibria and the relation of these equilibria to Nash equilibria of associated $N$-player games are considered.
	In \citet{NeumannComputation} then the existence of stationary equilibria as well as several tools for the computation of these equilibria has been considered.
	
	We remark that stationary equilibria are of interest for several reasons:
	The computation of dynamic equilibria is impossible for an infinite time horizon, namely even the underlying individual control problem is intractable \citep{NeumannMyopic}, and for a finite time horizon it is equivalent to solving a forward-backward system of differential equations, which are notoriously intractable \citep{TrierPreprint}.
	However, we see (at least in an example) that stationary equilibria are under certain conditions limit objects of dynamic equilibria if the time horizon is large \citep{KolokoltsovBotnetCorruption}. 
	Moreover, we observe that stationary equilibria are limit points of a partially rational learning rule, the myopic adjustment process \citep{NeumannMyopic}.

	Now that results regarding existence and computation of stationary equilibria are available and that there is evidence that stationary equilibria are a sensible prediction of agents' behaviour in these games, a natural next step is to understand what happens to equilibria if the game is slightly perturbed. 
	More precisely, we are interested in essential equilibria, which are equilibria such that any perturbed game that is close to the original game has an equilibrium close to the considered equilibrium of the original game.
	This notion has been introduced by \citet{WenTsunEssential1962} in the context of normal form games with finite strategy spaces and has also been considered for static games with infinite action spaces \citep{YuInfiniteAction,CarbonellInfiniteAction,ScalzoInfiniteAction}, static population games \citep{CorreaPopuation} and Markov perfect equilibria \citep{DoraszelskiMPE2010}.
	
	This paper introduces essential equilibria in the setting of \citet{NeumannComputation} by providing a topological structure on the set of games and equilibria.
	We prove that the set of essential games, which are all those games where all equilibria are essential, is residual.
	The proof follows the classical line of argument, namely, we show that  the equilibrium correspondence is upper semicontinuous and that games with only essential equilibria are the points of continuity of this map.
	The classical theorem of \citet[Section 7]{FortSemiContinuity} then yields the desired result.
	Furthermore, we provide criteria to identify essential equilibria. 
	The first is a simple and classical consequence of the genericity statement, namely that unique equilibria are essential.
	Thereafter, we provide a deeper analysis of the results on equilibrium computation, which yields another criterion to obtain essentiality of equilibria.
	More precisely, we obtain that equilibria with a deterministic equilibrium strategy are essential when the equilibrium distribution is well-behaved (for example unique or an essential fixed point of an associated map).
	
	The remainder of the paper is structured as follows: In Section \ref{Section:Model} we introduce the considered model and define the notion of an essential equilibrium. Moreover, we review the relevant results regarding equilibrium computation. In Section \ref{Section:Generic} we prove that the set of all games with only essential stationary equilibria is residual and that games with a unique stationary equilibrium are essential. In Section \ref{Section:Further} we then introduce the second essentiality criterion. 
		
	\section{Stationary Equilibria of Mean Field Games with Finite State and Action Space}
	\label{Section:Model}
	
	In this section we introduce the considered mean field games model as in \citet{NeumannComputation, NeumannMyopic}. For details (in particular regarding well-definition and intuitions) we refer the reader to \citet{NeumannComputation}.  Moreover, we introduce in this section the notion of essential equilibria.
	
	Let $\mathcal{S}=\{1, \ldots, S\}$ ($S>1$) be the set of possible states of each player and let $\mathcal{A}=\{1, \ldots, A\}$ be the set of possible actions. 
	With $\mathcal{P}(\mathcal{S})$ we denote the probability simplex over $\mathcal{S}$ and with $\mathcal{P}(\mathcal{A})$ the probability simplex over $\mathcal{A}$. 
	A \textit{(mixed) strategy} is a measurable function $\pi: \mathcal{S} \times [0,\infty) \rightarrow \mathcal{P}(\mathcal{A})$, $(i,t) \mapsto (\pi_{ia}(t))_{a \in \mathcal{A}}$ with the interpretation that $\pi_{ia}(t)$ is the probability that at time $t$ and in state $i$ the player chooses action $a$. 
	A strategy $\pi=d:\mathcal{S} \times [0,\infty) \rightarrow \mathcal{P}(\mathcal{A})$ is \textit{deterministic} if for all $t \ge 0$ and for all $i \in \mathcal{S}$ that there is an $a \in \mathcal{A}$ such that $d_{ia}(t)=1$ and $d_{ia'}=0$ for all $a' \in \mathcal{A} \setminus \{a\}$. 
	Alternatively, we can represent a deterministic strategy equivalently by $d: \mathcal{S} \times [0,\infty) \rightarrow \mathcal{A}, (i,t) \mapsto d_i(t)$ with the interpretation that $d_i(t)=a$ states that at time $t$ in state $i$ action $a$ is chosen. 
	A \textit{stationary strategy} is a map $\pi: \mathcal{S} \times [0,\infty) \rightarrow \mathcal{P}(\mathcal{A})$ such that $\pi_{ia}(t) = \pi_{ia}$ for all $t \ge 0$. 
	With $\Pi$ we denote the set of all (mixed) strategies and with $D$ the set of all deterministic strategies. By $\Pi^s$ ($D^s$) we denote the set of all stationary (deterministic) strategies.

	The individual dynamics of each player given a Lipschitz continuous flow of population distributions $m: [0, \infty) \rightarrow \mathcal{P}(\mathcal{S})$ and a strategy $\pi: \mathcal{S} \times [0, \infty) \rightarrow \mathcal{P}(\mathcal{A})$ are given as a Markov process $X^\pi(m)$ with given initial distribution $x_0 \in \mathcal{P}(\mathcal{S})$ and infinitesimal generator given by the $Q(t)$-matrix $$\left( Q^\pi(m(t),t) \right)_{ij} = \sum_{a \in \mathcal{A}} Q_{ija}(m(t)) \pi_{ia}(t),$$  where for all $a \in \mathcal{A}$ and $m \in \mathcal{P}(\mathcal{S})$ the matrices $(Q_{\cdot \cdot a}(m))_{a \in \mathcal{A}}$ are conservative generators, that is $Q_{ija}(m) \ge 0$ for all $i, j \in \mathcal{S}$ with $i \neq j$ and $\sum_{j \in \mathcal{S}} Q_{ija}(m) =0$ for all $i \in \mathcal{S}$.

	The goal of each player is to maximize his expected discounted reward, which is given by 
	\begin{equation}
	\label{valueFunction}
	V_{x_0}(\pi, m) = \mathbb{E} \left[\int_0^\infty \left(\sum_{a \in \mathcal{A}} r_{X^\pi_t(m)a} (m(t)) \pi_{X^\pi_t(m) a}(t) \right) e^{-\beta t} \text{d}t\right],
	\end{equation} where $r: \mathcal{S} \times \mathcal{A} \times \mathcal{P}(\mathcal{S}) \rightarrow \mathbb{R}$ is a real-valued function and $\beta \in (0,1)$ is the discount factor. 
	That is, for a fixed flow of population distributions $m: [0,\infty) \mapsto \mathcal{P}(\mathcal{S})$ we face a Markov decision process with expected discounted reward criterion and time-inhomogeneous reward functions and transition rates. 
	
	We will work under the following assumption, which ensures that the model is well-defined and that dynamic as well as stationary equilibria exist \citep{NeumannComputation, DoncelPaper}:
	
	\begin{assumption}
		\label{assumption:continuous}
		For all $i, j \in \mathcal{S}$ and all $a \in \mathcal{A}$ the function $m \mapsto Q_{ija}(m)$ mapping from $\mathcal{P}(\mathcal{S})$ to $\mathbb{R}$ is Lipschitz-continuous in $m$ . 
		For all $i \in \mathcal{S}$ and all $a \in \mathcal{A}$ the function $m \mapsto r_{ia}(m)$ mapping from $\mathcal{P}(\mathcal{S})$ to $\mathbb{R}$ is continuous in $m$.
	\end{assumption}
	
	By $\mathcal{G}$ we denote the set of all games satisfying assumption \ref{assumption:continuous} and we denote a particular game by $(Q,r)$. 
	Moreover, we equip $\mathcal{G}$ with the following metric
	$$d((Q,r),(Q',r')) = \sup_{i,j \in \mathcal{S}, a \in \mathcal{A}, m \in \mathcal{P}(\mathcal{S})} |Q_{ija}(m)-Q_{ija}'(m)| + \sup_{i \in \mathcal{S}, a \in \mathcal{A}, m \in \mathcal{P}(\mathcal{S})} |r_{ia}(m)-r_{ia}'(m)|.$$
	
	\begin{remark}
		The space $(\mathcal{G},d)$ is a complete metric space since it is a closed subset of the space of all continuous functions $(Q,r)$.
	\end{remark}
	
	\begin{definition}
		\label{definition:Stationary}
		A \textit{stationary mean field equilibrium} is a pair $(m,\pi)$ consisting of a vector $m \in \mathcal{P}(\mathcal{S})$ and a stationary strategy $\pi \in \Pi^s$ such that
		\begin{itemize}
			\item for all $t \ge 0$ the marginal distribution of the process $X^\pi(m)$ at time $t$ is given by $m$ 
			\item for any initial distribution $x_0 \in \mathcal{P}(\mathcal{S})$ we have $V_{x_0}(\pi, m) \ge V_{x_0}(\pi', m)$ for all $\pi' \in \Pi$.
		\end{itemize}
	\end{definition}
	
	\begin{remark}
		Since for stationary strategies $\pi \in \Pi^s$ the matrix $Q_{ij}^\pi(m,t)$ does not depend on $t$ we write $Q^\pi_{ij}(m):= Q^\pi_{ij}(m,t)$. 
		Using this, we obtain that the first condition is equivalent to $$0 = \sum_{i \in \mathcal{S}}  m_i Q^\pi_{ij}(m) \quad \forall j \in \mathcal{S}.$$
	\end{remark}

	Any stationary equilibrium lies in the space $\mathcal{P}(\mathcal{S}) \times \mathcal{P}(\mathcal{A})^S$, which we equip with the maximum norm.
	Furthermore, we define the map $\text{SMFE}: \mathcal{G} \rightarrow 2^{\mathcal{P}(\mathcal{S}) \times \mathcal{P}(\mathcal{A})^S}$ as the map that maps every game $(Q,r)$ to the set of all stationary mean field equilibria of the game. Since by \citet[Theorem 4.1]{NeumannComputation} under Assumption \ref{assumption:continuous} at least one stationary equilibrium exists, this map is a well-defined set-valued map.
	
	In order to define the notion of essential equilibria we define the following notation for an arbitrary metric space $(X,d)$: For $A \subseteq X$ and $\epsilon>0$ we set
	$$N_\epsilon(A) = \{x \in X | \exists y \in A: d(x,y) < \epsilon\}$$ and on $2^X$ we define the Hausdorff metric by
	$$H(A,B) = \inf \{ \epsilon>0 : A \subseteq N_\epsilon(B) \wedge B \subseteq N_\epsilon(A)\}.$$
	
\begin{definition}
	Let $(m,\pi)$ be a stationary mean field equilibrium of the game $(Q,r)$. 
	We say that $(m, \pi)$ is \textit{essential} if for every $\epsilon >0$ there exists a $\delta >0$ such that for all games $(Q',r') \in N_\delta (Q,r)$ we have $(m, \pi) \in N_\epsilon( \text{SMFE}(Q',r'))$, which means that there is a stationary mean field equilibrium of $(Q', r')$ that lies in $N_\epsilon(m, \pi)$.
	We say that a game is essential if all stationary equilibria of the game are essential.
\end{definition}

	\section{Essential Games are Generic}
	\label{Section:Generic}
	
	This chapter presents the first main result of the paper, namely, that the set of all essential games is residual.
	The proof is based on the classical theorem by \citet{FortSemiContinuity} that the set of points of continuity of an upper semicontinuous map is residual. 
	Namely, we show that the equilibrium correspondence is upper semicontinuous and that a game is a point of continuity of the correspondence if and only if it is essential.
	
	\begin{theorem}
		\label{Generic:Theorem}
		The set of all games for which all equilibrium points are essential is residual in the set of all games $\mathcal{G}$. 
		Moreover, this set lies dense in $\mathcal{G}$. 
	\end{theorem}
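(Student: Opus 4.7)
The plan is to apply Fort's theorem to the equilibrium correspondence $\text{SMFE}: (\mathcal{G}, d) \to 2^{\mathcal{P}(\mathcal{S}) \times \mathcal{P}(\mathcal{A})^S}$. This requires (i) showing that $\text{SMFE}$ is nonempty- and compact-valued and upper semicontinuous on the complete metric space $\mathcal{G}$, and (ii) proving that a game is essential precisely when it is a point of continuity of $\text{SMFE}$ with respect to the Hausdorff metric on the image. Once these are in place, Fort's theorem directly yields that the set of continuity points is residual in $\mathcal{G}$, and density is then automatic from the Baire property of the complete metric space noted after Assumption \ref{assumption:continuous}.

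For (i), nonemptiness is immediate from \citet[Theorem 4.1]{NeumannComputation}. To establish upper semicontinuity I would fix a sequence $(Q^n, r^n) \to (Q, r)$ together with $(m^n, \pi^n) \in \text{SMFE}(Q^n, r^n)$ converging to $(m, \pi)$ and verify $(m, \pi) \in \text{SMFE}(Q, r)$. Passing to the limit in the stationarity equation $\sum_i m^n_i (Q^n)^{\pi^n}_{ij}(m^n) = 0$ is direct, using the Lipschitz property from Assumption \ref{assumption:continuous} and the uniform convergence induced by $d$. For the optimality condition, since $(m, \pi)$ is stationary the induced MDP is time-homogeneous, so a classical result for discounted MDPs allows restricting the comparison in Definition \ref{definition:Stationary} to the finite set $D^s$ of stationary deterministic strategies. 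On $D^s$ the value function admits the closed-form representation $V_{x_0}(d, m) = x_0^\top (\beta I - Q^d(m))^{-1} r^d(m)$, which is jointly continuous in $(d, m, Q, r)$; hence the optimal value, being a maximum over finitely many continuous functions, is continuous as well, and the optimality inequality survives the limit. Compactness of $\text{SMFE}(Q, r)$ then follows by applying the same closedness argument to constant sequences, since the codomain is itself compact.

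For (ii), the implication that continuity at $(Q, r)$ forces all its equilibria to be essential is immediate from the definitions. Conversely, suppose every $(m, \pi) \in \text{SMFE}(Q, r)$ is essential. Upper semicontinuity already yields $\text{SMFE}(Q', r') \subseteq N_\epsilon(\text{SMFE}(Q, r))$ for $(Q', r')$ in some $\delta_1$-neighborhood of $(Q, r)$. For the reverse Hausdorff inclusion I would cover the compact set $\text{SMFE}(Q, r)$ by finitely many $\epsilon$-balls centered at equilibria $(m_k, \pi_k)$; the common $\delta_2 > 0$ obtained by intersecting the essentiality neighborhoods of the $(m_k, \pi_k)$ then ensures that each such ball meets $\text{SMFE}(Q', r')$ whenever $d((Q', r'), (Q, r)) < \delta_2$, giving $\text{SMFE}(Q, r) \subseteq N_{2 \epsilon}(\text{SMFE}(Q', r'))$. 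Taking $\delta = \min\{\delta_1, \delta_2\}$ delivers Hausdorff continuity.

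The step I expect to be most delicate is the continuity in the game parameters of the optimal value function, because the definition of equilibrium compares $\pi$ against all of $\Pi$, including arbitrary time-dependent mixed strategies. The reduction to $D^s$ is crucial here: it replaces a supremum over a large function space, whose continuity in the parameters would require careful Berge-type arguments, by a maximum over a finite set on which continuity is transparent from the resolvent formula. Everything else in the proof is then a standard assembly of the Fort theorem machinery.
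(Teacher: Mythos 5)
Your proposal is correct and follows essentially the same strategy as the paper: upper semicontinuity of $\text{SMFE}$, the equivalence of essentiality with Hausdorff-continuity of $\text{SMFE}$ via a finite cover of the compact equilibrium set, and then Fort's theorem plus Baire density. The only minor divergence is in verifying optimality of the limit strategy in the upper semicontinuity step: the paper stabilizes the supports of $\pi^n$ along a subsequence and invokes the characterization of optimal stationary strategies as convex combinations of optimal deterministic ones, whereas you pass to the limit directly in the value of the (possibly mixed) stationary strategy via the resolvent formula together with the reduction of the comparison class to $D^s$ --- both routes work.
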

	
	\begin{proof}
		\textit{Step 1: The map $\text{SMFE}(\cdot)$ is upper semicontinuous.}
		
		Let $(Q,r) \in \mathcal{G}$ and $\epsilon>0$. Assume that there is no $\delta>0$ such that any game $(Q',r') \in N_\delta(Q,r)$ satisfies $\text{SMFE}(Q',r')\subseteq N_\epsilon(\text{SMFE}(Q,r))$. Then we find sequences $(Q^n, r^n)_{n \in \mathbb{N}}$ and $(m^n,\pi^n)_{n \in \mathbb{N}}$ such that
		\begin{itemize}
			\item $d((Q^n,r^n),(Q,r))< \frac{1}{n}$
			\item $(m^n, \pi^n) \in \text{SMFE}(Q^n,r^n)$ for all $n \in \mathbb{N}$
			\item $(m^n, \pi^n) \notin N_\epsilon(\text{SMFE}(Q,r))$.
		\end{itemize}
	
		Since $\mathcal{P}(\mathcal{S}) \times \mathcal{P}(\mathcal{A})^S$ is compact we find a converging subsequence $(m^{n_k^1},\pi^{n_k^1})_{k \in \mathbb{N}}$ of $(m^n,\pi^n)_{n \in \mathbb{N}}$. Let $(m, \pi)$ be its limit.
		Let $A_1^k \times \ldots \times A_S^k \subseteq \mathcal{A}^S$ be such that $\pi_{ia}^{n_k^1}>0$ for all $i \in \mathcal{S}$, $a \in A_i^k$ and $\pi_{ia}^{n_k^1}=0$ for all $i \in \mathcal{S}$ and $a \notin A_i^k$. 
		Since $\mathcal{A}$ is finite, we find a set $A_1 \times \ldots \times A_S$ that occurs infinitely often.
		Let $(m^{n_k^2}, \pi^{n_k^2})_{k \in \mathbb{N}}$ be the subsequence of $(m^{n_k^1},\pi^{n_k^1})_{k \in \mathbb{N}}$ that runs through all indices $k$ such that $A_1^k \times \ldots \times A_S^k = A_1 \times \ldots \times A_S$.
		
		By Theorem 3.2 in \citet{NeumannComputation} a stationary strategy $\pi \in \Pi^s$ is optimal if and only if it is a convex combination of optimal deterministic stationary strategies.		
		Since $(m^{n_k^2}, \pi^{n_k^2})$ is for each $m \in \mathbb{N}$ a stationary mean field equilibrium, this implies that for all strategies $d \in D^s$ such that $d(i) \in A_i$ for all $i \in \mathcal{S}$ we have $V(d,m^{n_k^2}) = V^\ast(m^{n_k^2})$, where $V^\ast(m)$ is the value function of the individual control problem.
		By \citet[Section 3]{NeumannComputation} the functions $V(d, \cdot)$ and $V^\ast(\cdot)$ are continuous. Thus, $V(d,m) = V^\ast(m)$ for all strategies $d \in D^s$ such that $d(i) \in A_i$ for all $i \in \mathcal{S}$.
		Since $\pi^{n_k^2}_{ia}=0$ for all $i \in \mathcal{S}, a \notin A_i, k \in \mathbb{N}$ we have $\pi_{ia}=0$ for all $i \in \mathcal{S}$ and $a \notin A_i$.
		Again using the fact that a stationary strategy is optimal if and only it is a convex combination of optimal deterministic stationary strategies, we obtain that $\pi$ is indeed optimal for $m$.
		
		Furthermore, by uniform convergence, we have that $$\sum_{i \in \mathcal{S}} \sum_{a \in \mathcal{A}} Q_{ija}(m) m_i \pi_{ia} \leftarrow \sum_{i \in \mathcal{S}} \sum_{a \in \mathcal{A}} Q_{ij}^{n_l} (m^{n_l}) m_i^{n_l} \pi_{ia}^{n_l} = 0.$$
		Thus, $m$ is a stationary point given $Q^\pi(\cdot)$, which in total yields that $(m,\pi) \in \text{SMFE}(Q,r)$.		
		However, $(m^{n_k^2}, \pi^{n_k^2}) \notin N_\epsilon( \text{SMFE}(Q,r))$ for all $n \in \mathbb{N}$ implies $(m, \pi) \notin N_\epsilon( \text{SMFE}(Q,r))$, a contradiction. Therefore, we conclude that $\text{SMFE}(\cdot)$ is indeed upper semicontinuous.
		
		\textit{Step 2: A game $(Q,r)$ is essential if and only if $(Q,r)$ is a point of continuity of $\text{SMFE}$.}
		
		Assume first that the game $(Q,r)$ is essential. Then, by definition, there is for each $(m,\pi) \in \text{SMFE}(Q,r)$ an $\delta_{(m,\pi)}>0$ such that all games $(Q',r') \in N_{\delta_{(m,\pi)}}(\text{SMFE}(Q,r))$ have a stationary mean field equilibrium in the $\frac{\epsilon}{2}$-neighbourhood of $(m, \pi)$.
		
		Let us first note that $\text{SMFE}(Q,r)$ is compact: 
		Indeed, let $(m^n,\pi^n)_{n \in \mathbb{N}}$ be a sequence in $\text{SMFE}(Q,r)$. 
		Since $\mathcal{P}(\mathcal{S}) \times \mathcal{P}(\mathcal{A})^S$ is compact, we find a converging subsequence $(m^{n_k},\pi^{n_k})_{k \in \mathbb{N}}$ with limit $(m,\pi) \in \mathcal{P}(\mathcal{S}) \times \mathcal{P}(\mathcal{A})^S$.
		We note that $\pi_{ia}>0$ if and only if $\pi_{ia}^{n_k}>0$ for infinitely many $k \in \mathbb{N}$. Thus, by the same argument as above $\pi$ is optimal for $m$. By uniform convergence we then obtain that $(m, \pi) \in \text{SMFE}(Q,r)$.
		
		By compactness of $\text{SMFE}(Q,r)$, there exists a finite set $\{(m^1, \pi^1), \ldots, (m^n, \pi^n)\} \subseteq \text{SMFE}(Q,r)$ such that each point in $\text{SMFE}(Q,r)$ lies within the $\frac{\epsilon}{2}$-neighbourhood of some point $m_k$, $k \in \{1, \ldots, n\}$. Choose $\delta = \min_{k \in \{1, \ldots, n\}} \delta_{(m^k,\pi^k)}$.
		
		Now let $(Q',r') \in N_\delta(Q,r)$ and let $(m,\pi) \in \text{SMFE}(Q,r)$. 
		Then by construction there is a point $(m^k,\pi^k)$ at most $\frac{\epsilon}{2}$ away from $(m,\pi)$. 
		By choice of $\delta$, we find equilibrium given $(Q',r')$ at most $\frac{\epsilon}{2}$ away from $(m^k,\pi^k)$.
		This yields that there is an equilibrium of the game $(Q',r')$ at most $\epsilon$ away from $(m,\pi)$. This proves, together with the first part, that $\text{SMFE}(\cdot)$ is continuous at $(Q,r)$.
		
		Let us now assume that $\text{SMFE}(\cdot)$ is continuous at $(Q,r)$. Let $(m, \pi) \in \text{SMFE}(Q,r)$ and $\epsilon>0$.
		By continuity of $\text{SMFE}(\cdot)$ at $(Q,r)$, we find an $\delta>0$ such that $d((Q,r),(Q',r'))< \delta$ implies that $H(\text{SMFE}(Q,r),\text{SMFE}(Q',r'))< \epsilon$. 
		In particular, this yields that all $(Q',r')\in N_\delta(Q,r)$ satisfy that $\text{SMFE}(Q,r) \subseteq N_\epsilon(\text{SMFE}(Q',r'))$, which implies that for any equilibrium $(m, \pi)$ of $(Q,r)$ there is $\epsilon$ an equilibrium of $(Q',r')$ in the $\epsilon$-neighbourhood of $(m, \pi)$.
		
		\textit{Step 3: Conclusion}
		
		The classical theorem of \citet[Section 7]{FortEssential1950} states that the points of continuity of any upper semi-continuous function $F(\cdot)$ ranging from a topological space to the power set of a separable metric space equipped with metric $H(\cdot,\cdot)$ is a $G_\delta$-residual set in the topological space.
		This theorem yields that the set of all essential games is a $G_\delta$-residual set. 
		Since $\mathcal{G}$ is a closed subset of a complete metric space, we obtain by Baire's Theorem \citep[p.200]{KelleyTopology1955} that this set is dense in $\mathcal{G}$.
	\end{proof}
	
	The following sufficient condition for essentiality is a classical and immediate consequence of the genericity result:
	
	\begin{corollary}
		\label{Uniqueness:Theorem}
		Let $(Q,r) \in \mathcal{G}$ and let $(m, \pi)$ be the unique equilibrium of the game $(Q,r)$. Then $(m, \pi)$ is essential.
	\end{corollary}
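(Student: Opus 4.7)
The plan is to deduce essentiality of the unique equilibrium directly from the upper semicontinuity of $\text{SMFE}(\cdot)$, which was already established as Step 1 in the proof of Theorem \ref{Generic:Theorem}, together with the fact that every game in $\mathcal{G}$ admits at least one stationary mean field equilibrium by Theorem 4.1 of \citet{NeumannComputation}. The intuition is that when the set of equilibria of $(Q,r)$ collapses to a single point, upper semicontinuity says that the equilibria of any nearby game must accumulate around that single point, and non-emptiness guarantees that at least one such equilibrium actually exists to witness the essentiality condition.

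The argument I would carry out runs as follows. Fix $\epsilon > 0$. By Step 1 of Theorem \ref{Generic:Theorem}, upper semicontinuity of $\text{SMFE}(\cdot)$ at $(Q,r)$ yields a $\delta > 0$ such that for every $(Q',r') \in N_\delta(Q,r)$ we have
$$\text{SMFE}(Q',r') \subseteq N_\epsilon(\text{SMFE}(Q,r)) = N_\epsilon(\{(m,\pi)\}),$$
where the equality uses the uniqueness hypothesis. Because Theorem 4.1 of \citet{NeumannComputation} guarantees that $\text{SMFE}(Q',r') \neq \emptyset$ for every $(Q',r') \in \mathcal{G}$, I can pick some $(m',\pi') \in \text{SMFE}(Q',r')$. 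This element lies in $N_\epsilon(\{(m,\pi)\})$, which by definition of $N_\epsilon$ means $d((m,\pi),(m',\pi')) < \epsilon$, and therefore $(m,\pi) \in N_\epsilon(\text{SMFE}(Q',r'))$. This is exactly the definition of essentiality of $(m,\pi)$.

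There is no genuine obstacle here, which is why the author flags the result as immediate. The only subtle point worth stating explicitly is that upper semicontinuity alone would be compatible with $\text{SMFE}(Q',r')$ being empty for some perturbations, and that possibility is ruled out only by invoking the existence result for stationary equilibria. Once that is noted, the proof is a two-line invocation of Step 1 plus existence, and no further machinery from the genericity argument (such as Fort's theorem or the characterization in Step 2) is required.
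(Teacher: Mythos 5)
Your proof is correct and follows essentially the same route as the paper: both invoke the upper semicontinuity established in Step 1 of Theorem \ref{Generic:Theorem}, the non-emptiness of $\text{SMFE}(Q',r')$ from the existence result, and the singleton structure of $\text{SMFE}(Q,r)$ to conclude. The only cosmetic difference is that the paper phrases the conclusion as continuity of $\text{SMFE}(\cdot)$ at $(Q,r)$ while you go directly to the definition of essentiality.
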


	\begin{proof}
		Let $\epsilon>0$. 
		Since $\text{SMFE}(\cdot)$ is upper semicontinuous there is an $\delta>0$ such that for any $(Q',r') \subseteq N_\delta(Q,r)$ we have $\text{SMFE}(Q',r') \subseteq N_\epsilon(\text{SMFE}(Q,r))$.
		The set $\text{SMFE}(Q',r')$ is non-empty because we find a stationary equilibrium for any game in $\mathcal{G}$. 
		This and the fact that $\text{SMFE}(Q,r)$ is a singleton yields that $\text{SMFE}(Q,r) \subseteq N_\epsilon(\text{SMFE}(Q',r'))$.
		This in turn implies that $\text{SMFE}(\cdot)$ is continuous at $(Q,r)$, which means that $(m,\pi)$ is essential.
	\end{proof}
	
	\section{A Second Characterization Result for Essential Stationary Equilibria}
	\label{Section:Further}
	
	In this section we will provide another characterization result for essential equilibria.
	It will rely on the results regarding equilibrium computation derived in \citet{NeumannComputation}.
	More precisely, for an equilibrium $(m, \pi)$ to be essential it is necessary that under a small perturbations there is still an equilibrium close to $(m,\pi)$.
	Since it holds that a stationary strategy is optimal if and only if it is a convex combination of optimal deterministic stationary strategies, a sufficient condition for an essential equilibrium is that all deterministic strategies that had positive weight in the equilibrium strategy still have to be optimal given the perturbation.
	However, we cannot expect that this holds for the equilibrium distribution $m$ and also in a neighbourhood this is only clear if at the equilibrium distribution at most two strategies are optimal.
	Moreover, in this neighbourhood there has to be a stationary strategy that is optimal for the perturbed game and has a stationary point $m'$ (i.e. $0=m^T(Q')^{\pi'}(m')$).
	However, the nonlinear equations $0=m^T Q^{\pi}(m)$ and $0=m^T(Q')^{\pi'}(m')$ do not relate for different strategies $\pi$ and $\pi'$.
	Thus, it is not possible to draw any conclusions for mixed strategy equilibria.
	Nonetheless, we obtain a criterion for deterministic strategy equilibria.
	
	To prove this criterion we will first analyse the two problems (the individual agent's control problem and the fixed point problem) individually and then combine the results.
	First, we prove that a deterministic strategy that is the unique optimal strategy for a certain point will be optimal in a neighbourhood of this point for all slightly perturbed games.
	Thereafter, we will analyse the fixed point problem and obtain that there are no full characterization results of essential stationary points, but similar results to \citet{FortEssential1950}.
	
	Let us start with a preliminary lemma to prove that deterministic stationary strategies that are the unique optimal strategy remain optimal under small perturbations.

	\begin{lemma}
		\label{OptSets:Preliminary}
		Let $(Q,r) \in \mathcal{G}$ be a game and let $\gamma>0$. Then there exists a $\delta>0$ such that for all $(Q',r') \in N_\delta(Q,r)$ and all deterministic stationary strategies $d \in D^s$ the distance of the expected discounted reward of the associated Markov decision process given strategy $d$ at any point $m \in \mathcal{P}(\mathcal{S})$ in the game $(Q,r)$ and in the perturbed game $(Q',r')$ is at most $\gamma$.
	\end{lemma}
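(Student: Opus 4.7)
The plan is to exploit the fact that, for a deterministic stationary strategy $d \in D^s$ and a frozen population distribution $m \in \mathcal{P}(\mathcal{S})$, the expected discounted reward is the solution of a linear system. Writing $[Q^d(m)]_{ij} := Q_{ij\,d(i)}(m)$ and $r^d(m)_i := r_{i\,d(i)}(m)$, the vector $V^d(m) \in \mathbb{R}^S$ whose $i$-th component is the expected discounted reward when starting in state $i$ satisfies $(\beta I - Q^d(m))\,V^d(m) = r^d(m)$, or equivalently
\begin{equation*}
V^d(m) = \int_0^\infty e^{-\beta t} e^{Q^d(m) t} r^d(m)\, dt,
\end{equation*}
and the analogous identity holds in the perturbed game with $V'^d(m) = (\beta I - Q'^d(m))^{-1} r'^d(m)$. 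Since $Q^d(m)$ is a conservative generator, the semigroup $e^{Q^d(m) t}$ is stochastic; the integral representation then yields the uniform operator-norm bound $\|(\beta I - Q^d(m))^{-1}\|_\infty \le 1/\beta$ with respect to the maximum norm on $\mathbb{R}^S$, and the same bound applies to the perturbed resolvent.

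The heart of the argument is the resolvent identity. Setting $A := \beta I - Q^d(m)$ and $B := \beta I - Q'^d(m)$, one decomposes
\begin{equation*}
V^d(m) - V'^d(m) = A^{-1}\bigl(r^d(m) - r'^d(m)\bigr) + A^{-1}\bigl(Q^d(m) - Q'^d(m)\bigr) B^{-1} r'^d(m).
\end{equation*}
By definition of the metric on $\mathcal{G}$, the entries of $Q^d(m) - Q'^d(m)$ and of $r^d(m) - r'^d(m)$ are bounded in absolute value by $d((Q,r),(Q',r'))$, hence $\|r^d(m) - r'^d(m)\|_\infty \le \delta$ and $\|Q^d(m) - Q'^d(m)\|_\infty \le S\,\delta$ in the operator norm induced by the maximum norm. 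Continuity of $r$ together with compactness of $\mathcal{P}(\mathcal{S})$ yields a uniform bound $C_r$ on $\|r^d(m)\|_\infty$ that depends only on $(Q,r)$, so $\|r'^d(m)\|_\infty \le C_r + \delta$. Combining these estimates with the resolvent bound gives
\begin{equation*}
\|V^d(m) - V'^d(m)\|_\infty \le \frac{\delta}{\beta} + \frac{S\,\delta\,(C_r + \delta)}{\beta^2}.
\end{equation*}

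This bound is uniform in $m \in \mathcal{P}(\mathcal{S})$ and across the (finite) set $D^s$ of deterministic stationary strategies, and since the reward starting from any initial distribution $x_0$ equals $\langle x_0, V^d(m)\rangle$, the same estimate controls the difference of expected discounted rewards uniformly over $x_0$ as well. To conclude, choose $\delta > 0$ small enough so that the right-hand side above is at most $\gamma$. I expect the only delicate point to be the uniform operator-norm bound $1/\beta$ on the resolvents; once established via the stochasticity of $e^{Q^d(m) t}$, the rest is bookkeeping via the resolvent identity and the triangle inequality.
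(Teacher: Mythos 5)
Your proof is correct, and it takes a genuinely different (and arguably cleaner) route than the paper. The paper starts from the same linear system $(\beta I - Q^d(m))V^d(m) = r^d(m)$ but then invokes a general numerical-linear-algebra perturbation theorem stated in terms of the condition number $\kappa(\beta I - Q^d(m))$ and relative errors; this forces it to introduce uniform constants $L_1,\dots,L_4$ (infima and suprema of $\|\beta I - Q^d(m)\|$, $\|(\beta I - Q^d(m))^{-1}\|$, $\|r^d(m)\|$, $\|V^d(m)\|$ over $m$ and $d$) and, in particular, to normalize the rewards to be strictly positive so that $\inf \|r^d(m)\| > 0$ makes the relative error in $r$ meaningful. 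You sidestep all of that: the second resolvent identity $A^{-1}r - B^{-1}r' = A^{-1}(r-r') + A^{-1}(B-A)B^{-1}r'$ together with the explicit bound $\|(\beta I - Q)^{-1}\|_\infty \le 1/\beta$, valid for every conservative generator via the stochasticity of $e^{Qt}$, gives an absolute (not relative) error estimate that is automatically uniform in $m$, in $d$, and even across all games in $\mathcal{G}$, with no positivity normalization and no compactness argument needed for the resolvent norms (only the elementary bound $\|r^d(m)\|_\infty \le C_r$ from continuity on the compact simplex). The resulting explicit constant $\delta/\beta + S\delta(C_r+\delta)/\beta^2$ makes the choice of $\delta$ transparent. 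The one point worth stating explicitly is that the perturbed $(Q',r')$ is also required to lie in $\mathcal{G}$, so $Q'^d(m)$ is again a conservative generator and the same $1/\beta$ resolvent bound applies to $B^{-1}$ --- you note this, and it is exactly what makes the argument close.
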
 
	
	\begin{proof}
		Without loss of generality we assume that $r_{ia}(m)>0$ for all $i \in \mathcal{S}$, $a \in \mathcal{A}$ and $m \in \mathcal{P}(\mathcal{S})$, else consider the game with rewards $r_{ia}(m) + (- \inf_{i \in \mathcal{S}, a \in \mathcal{A},m \in \mathcal{P}(\mathcal{S})} r_{ia}(m) +1 )$, which is equivalent.
		
		By \citet[Theorem 2.4]{KakumanuValueFunction1971} the expected discounted reward $V^d(m)$ of strategy $d \in D^s$ is given as the unique solution of $\left( \beta I-Q^d(m)\right) V^d(m) =r^d(m)$, where $r^d(m) = \left( r_{id(i)}(m)\right)_{i \in \mathcal{S}}$ and $Q^d(m) = \left( Q_{ijd(i)}(m)\right)_{i,j\in \mathcal{S}}$. 
		By \citet[Corollary C.4]{PutermanMDP1994} we moreover obtain that $\left( \beta I-Q^d(m)\right)$ is invertible.
		This allows us to apply the perturbation theorem \citep[Theorem 2.43]{WendlandNumericalLA2018}, which yields that for any game $(Q',r')$ such that $||Q^d(m)-(Q')^d(m)|| < \frac{1}{||Q(m)^{-1}||}$ it holds that
		\begin{align*}
			& ||(V')^d(m)-V^d(m)|| \\
			&\le ||V^d(m)|| \kappa (\beta I - Q^d(m)) \left( 1 - \kappa (\beta I -Q^d(m))
			\frac{||(Q')^d(m)-Q^d(m)||}{||\beta I - Q ^d(m)||} \right) ^{-1}\\
			&\quad \cdot \left( \frac{||(Q')^d(m)-Q^d(m)||}{||\beta I - Q ^d(m)||} + \frac{||(r')^d(m)-r^d(m)||}{||r^d(m)||} \right),
		\end{align*} where $\kappa(A):= ||A|| \cdot ||A^{-1}||$ is the conditioning number of the matrix $A$.
		
		Define
		\begin{align*}
			L_1 &:= \inf_{m \in \mathcal{P}(\mathcal{S}), d \in D^s} ||\beta I - Q^d(m)|| >0 \\
			L_2&:= \left( \sup_{m \in \mathcal{P}(\mathcal{S}), d \in D^s} || \beta I - Q^d(m)|| \right) \left( \sup_{m \in \mathcal{P}(\mathcal{S}), d \in D^s}  || \left(\beta I - Q^d(m) \right)^{-1} || \right) < \infty \\
			L_3 &:= \inf_{m \in \mathcal{P}(\mathcal{S}), d \in D^s} ||r^d(m)|| >0 \\
			L_4 &:= \sup_{m \in \mathcal{P}(\mathcal{S}), d \in D^s} ||V^d(m)|| >0
		\end{align*} and choose $\delta$ such that
		$$0 < \delta < \min \left\{ \min_{d \in D^s} \frac{1}{2 S ||(\beta I - Q^d(m))^{-1}||}, \frac{\gamma L_1 L_3}{2 L_2 L_4 (SL_3+L_1)} \right\}.$$
		Then 
		\begin{align*}
		& \left( 1 - \kappa(\beta I -Q^d(m)) \frac{||Q^d(m)-(Q')^d(m)||}{||\beta I -Q^d(m)||} \right)^{-1} \\
		&= \left( 1 - ||\beta I -Q^d(m))^{-1}|| \cdot ||Q^d(m) - (Q')^d(m)|| \right)^{-1} \\
		&\le \left( 1- || (\beta I -Q^d(m))^{-1}|| \cdot \frac{1}{2 || (\beta I - Q^d(m))^{-1}||} \right)^{-1} \\
		&= \left(1- \frac{1}{2}\right)^{-1} = 2. 
		\end{align*}
		Using this we obtain
		\begin{align*}
		& ||(V')^d(m)-V^d(m)|| \\
		&\le L_4 \cdot L_2 \cdot 2 \cdot  \left( \frac{||(Q')^d(m)-Q^d(m)||}{L_1} + \frac{||(r')^d(m)-r^d(m)||}{L_3} \right) \\
		&\le L_4 \cdot L_2 \cdot 2 \cdot  \left( \frac{S d((Q,r),(Q',r'))}{L_1} + \frac{ d((Q,r),(Q',r'))}{L_3} \right) \\
		& = d((Q,r),(Q',r')) \cdot \frac{2 L_2 L_4 (S L_3 +  L_1)}{L_1 \cdot L_3} \le \gamma.
		\end{align*}		
	\end{proof}

	With these preparations we can prove the announced result regarding optimality of deterministic strategies under small perturbations. For this let $\mathcal{D}(m)$ denote the set of all optimal deterministic stationary strategies, i.e. those strategies $d \in D^s$ that maximize $V(\cdot,m)$. We remark that they can be explicitly characterized and that the set $\mathcal{D}(m)$ is non-empty for all $m \in \mathcal{P}(\mathcal{S})$ \citep[Theorem 3.2]{NeumannComputation}.

	\begin{lemma}
		\label{comparative:OptimalitySetsSmallPerturbationPositive}
		Let $(Q,r) \in \mathcal{G}$ be a game.
		Assume that $d \in D^s$ is the unique optimal deterministic strategy for $m$, that is $\mathcal{D}(m) = \{d\}$. 
		Then there is an $\epsilon>0$ such that for all $m' \in \overline{N_\epsilon(m)}$ we have $\mathcal{D}(m')=\{d\}$. 
		Furthermore, for any such $\epsilon$ there is a $\delta>0$ such that for all $(Q',r') \in N_\delta(Q,r)$ and all $m' \in N_\epsilon(m)$ we have $\mathcal{D}'(m')=\{d\}$. 
	\end{lemma}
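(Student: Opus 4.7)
The plan is to split the argument into the two claims of the lemma. Both parts rely on the finiteness of $D^s$ and $\mathcal{S}$, continuity of $m \mapsto V^d(m)$ (from \citet[Section 3]{NeumannComputation}), existence of an optimal deterministic policy (\citet[Theorem 3.2]{NeumannComputation}), and the uniform perturbation estimate of Lemma \ref{OptSets:Preliminary}.

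For the first claim I would introduce the dominance gap
\[
\eta := \min_{d' \in D^s \setminus \{d\}} \max_{i \in \mathcal{S}} \bigl( V^d_i(m) - V^{d'}_i(m) \bigr).
\]
This is strictly positive: $\mathcal{D}(m) = \{d\}$ forces $V^{d'}(m) \neq V^d(m)$ for every $d' \neq d$, so $V^{d'}_i(m) < V^d_i(m)$ for at least one $i$, and the outer $\min$ is taken over the finite set $D^s \setminus \{d\}$. By continuity of each $m' \mapsto V^{d''}(m')$ together with finiteness of $D^s$, I can pick $\epsilon > 0$ so small that $|V^{d''}_i(m') - V^{d''}_i(m)| < \eta/3$ holds uniformly in $d'' \in D^s$, $i \in \mathcal{S}$ and $m' \in \overline{N_\epsilon(m)}$. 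Writing $i^\ast = i^\ast(d')$ for the state realising the outer $\max$ in $\eta$, a two-term triangle inequality then yields $V^d_{i^\ast}(m') - V^{d'}_{i^\ast}(m') \ge \eta - 2(\eta/3) = \eta/3 > 0$ for every $d' \neq d$ and every $m' \in \overline{N_\epsilon(m)}$. Hence no $d' \neq d$ lies in $\mathcal{D}(m')$, and since $\mathcal{D}(m') \neq \emptyset$ we obtain $\mathcal{D}(m') = \{d\}$.

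For the second claim, fix such an $\epsilon$. Because the finitely many functions $m' \mapsto V^d_i(m') - V^{d'}_i(m')$ are continuous and $\overline{N_\epsilon(m)}$ is compact, the uniform gap
\[
\eta' := \inf_{m' \in \overline{N_\epsilon(m)}} \min_{d' \in D^s \setminus \{d\}} \max_{i \in \mathcal{S}} \bigl( V^d_i(m') - V^{d'}_i(m') \bigr)
\]
is attained and, by Part 1 applied at each $m'$, strictly positive. I then apply Lemma \ref{OptSets:Preliminary} with $\gamma := \eta'/3$ to obtain $\delta > 0$ such that $\|(V')^{d''}(m') - V^{d''}(m')\| < \eta'/3$ holds for every $(Q', r') \in N_\delta(Q, r)$, every $d'' \in D^s$ and every $m' \in \mathcal{P}(\mathcal{S})$. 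Repeating the triangle-inequality estimate from Part 1, but now sandwiching $(V')^d$ and $(V')^{d'}$ around their unperturbed counterparts at the state $i^\ast = i^\ast(d', m')$ realising the outer $\max$ at $m'$, yields $(V')^d_{i^\ast}(m') - (V')^{d'}_{i^\ast}(m') \ge \eta'/3 > 0$ for every $d' \neq d$ and every $m' \in \overline{N_\epsilon(m)}$. Combined with the existence of an optimal deterministic policy for the perturbed game, this forces $\mathcal{D}'(m') = \{d\}$.

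The principal difficulty is bookkeeping rather than analysis: optimality is a component-wise statement about value function vectors, so one has to track which state $i^\ast$ realises the dominance gap (depending in general on $d'$ and, in Part 2, also on $m'$). Once the gap has been uniformised on the compact ball $\overline{N_\epsilon(m)}$, Lemma \ref{OptSets:Preliminary} supplies exactly the missing piece and no further analytic machinery is needed.
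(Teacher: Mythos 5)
Your proof is correct and follows the same strategy as the paper's: establish a strictly positive optimality gap at $m$, propagate it to $\overline{N_\epsilon(m)}$ by continuity and compactness, and then absorb the perturbation via Lemma \ref{OptSets:Preliminary} with $\gamma$ equal to a third of the uniform gap. If anything, your bookkeeping is slightly more careful than the paper's, which asserts that $\mathcal{D}(m)=\{d\}$ gives $V^d(m)>V^{\hat d}(m)$ in \emph{every} coordinate, whereas you only use the immediately justified fact that each competitor is strictly dominated in \emph{some} coordinate and then recover $\mathcal{D}'(m')=\{d\}$ from nonemptiness of the set of optimal deterministic strategies.
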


	\begin{proof}
		Since $\mathcal{D}(m) = \{d\}$ we have that $V^d(m)>V^{\hat{d}}(m)$ pointwise for all $\hat{d} \in D^s \setminus \{d\}$. 
		By continuity of $V^d(\cdot)$ and $V^{\hat{d}}(\cdot)$ \citep[Section 3]{NeumannComputation}  we furthermore find an $\epsilon>0$ such that 
		$V^d(m') > V^{\hat{d}} (m')$ pointwise for all $m' \in \overline{N_\epsilon(m)}$ and all $\hat{d} \in D^s \setminus \{d\}$. 
		Since $\overline{N_\epsilon(m)}$ is compact, we have that
		$$\inf_{m' \in \overline{N_\epsilon(m)}} || V^d(m')-V^{\hat{d}}(m')|| = \gamma >0.$$ 
		By Lemma \ref{OptSets:Preliminary} there is a $\delta>0$ such that for all games $(Q',r') \in N_\delta(Q,r)$, all strategies $\hat{d} \in D^s$ and all $m' \in \mathcal{P}(\mathcal{S})$ we have $||V^{\hat{d}}(m')-(V')^{\hat{d}} (m')|| < \frac{\gamma}{3}.$ 
		With this it holds pointwise for $\hat{d} \in D^s \setminus \{d\}$ and $m' \in N_\epsilon(m)$ that
		$$(V')^d(m') - (V')^{\hat{d}}(m') > V^d(m') - \frac{\gamma}{3} 1  - V^{\hat{d}}(m') - \frac{\gamma}{3} 1 = V^d(m')-V^{\hat{d}}(m') - \frac{2 \gamma}{3}> 0.$$ 
		Thus, $d$ is the only optimal strategy in the perturbed game for $m'$.
	\end{proof}
	
	The analysis of the fixed point problem is more complex. 
	The question whether the equilibrium distribution is stable with respect to slight perturbations of the transition rates cannot be answered completely.
	Instead, it is closely linked to the question whether the fixed point of a certain map is essential.
	
	Let us introduce some notation:
	Let $\text{TR}$ be the set of all transition rate matrix function that can occur in our game, that is let it be the set of all functions $Q: \mathcal{P}(\mathcal{S}) \rightarrow \mathbb{R}^{S \times S}$ such that $Q(m)$ is a generator for all $m \in \mathcal{P}(\mathcal{S})$ and $Q_{ija}(m)$ is Lipschitz continuous in $m$. 
	Let us call the solutions of $0=m^TQ(m)$ in $\mathcal{P}(\mathcal{S})$ \textit{stationary points of $Q$}.

	\begin{definition}
		Let $Q \in \text{TR}$ and let $m$ be a stationary point given $Q$. We say that this point is an \textit{essential stationary point} if for all $\epsilon>0$ there is an $\delta>0$ such that all $Q' \in N_\delta(Q)$ have a stationary point in $N_\epsilon(m)$.
	\end{definition}
	
	Furthermore, let $x(\cdot)$ be the map that maps $m \in \mathcal{P}(\mathcal{S})$ to all solutions of $0=x^TQ(m)$. 
	We remark that $m$ is a stationary point of $Q$ if and only if $m$ is a fixed point of $x(\cdot)$.
	Then we obtain the following relation to essential fixed points:
	
	\begin{lemma}
		Let $Q \in \text{TR}$ and assume that $Q(m)$ is irreducible for all $m \in \mathcal{P}(\mathcal{S})$. 
		Furthermore, assume that $m$ is an essential fixed point of the map $x(\cdot)$. 
		Then $m$ is an essential stationary point. 
	\end{lemma}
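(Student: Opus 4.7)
The plan is to reduce essentiality of $m$ as a stationary point to its essentiality as a fixed point of $x(\cdot)$: given $\epsilon>0$, I invoke the essentiality of $m$ as a fixed point of $x(\cdot)$ to obtain $\eta>0$ such that every map sufficiently close (in the uniform Hausdorff sense) to $x(\cdot)$ admits a fixed point within $N_\epsilon(m)$, and then I search for $\delta>0$ such that whenever $d(Q,Q')<\delta$ the perturbed map $x'(\cdot)$, sending $m'$ to the set of solutions of $x^T Q'(m')=0$ in $\mathcal{P}(\mathcal{S})$, is $\eta$-close to $x(\cdot)$. Any fixed point of $x'(\cdot)$ in $N_\epsilon(m)$ is by definition a stationary point of $Q'$, completing the argument.

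The bulk of the work is therefore the uniform continuity of the assignment $Q\mapsto x(\cdot)$. Since each $Q(m')$ is an irreducible generator, $0$ is a simple eigenvalue of $Q(m')$ with a strictly positive normalised left eigenvector, so $x(m')$ is a singleton $\{\sigma(m')\}$ and $\sigma(\cdot)$ is continuous on the compact simplex $\mathcal{P}(\mathcal{S})$. The key observation is that for $d(Q,Q')$ smaller than the minimum strictly positive off-diagonal entry of $Q$ (which is bounded away from $0$ uniformly in $m'$ by continuity and compactness), every strictly positive entry of $Q(m')$ remains strictly positive in $Q'(m')$, so the communication graph only gains edges; hence $Q'(m')$ is still irreducible, $x'(m')$ is again a singleton $\{\sigma'(m')\}$, and one only needs to compare $\sigma(m')$ with $\sigma'(m')$.

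To make this comparison quantitative and uniform in $m'$, I build the standard augmented square system: let $M(m')$ be obtained from $Q(m')$ by replacing one column by the all-ones vector, so that $\sigma(m')^T M(m')=e^T$ for the corresponding standard basis row. Irreducibility makes $M(m')$ invertible for every $m'$ by an argument identical to the one used for $\beta I-Q^d(m)$ in Lemma \ref{OptSets:Preliminary}, and by compactness of $\mathcal{P}(\mathcal{S})$ and continuity of $Q$, $\sup_{m'}\|M(m')^{-1}\|$ is finite. A perturbation bound of the same flavour as the one applied in Lemma \ref{OptSets:Preliminary} then yields $\|\sigma'(m')-\sigma(m')\|\le C\, d(Q,Q')$ with $C$ independent of $m'$, as soon as $d(Q,Q')$ is small enough that $M'(m')$ remains uniformly invertible; choosing $\delta$ with $C\delta<\eta$ and below the bound of the previous paragraph finishes the reduction.

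The main obstacle is to ensure the closeness between $x(\cdot)$ and $x'(\cdot)$ is uniform and genuinely Hausdorff-type, not merely pointwise, since the essential-fixed-point hypothesis on $x(\cdot)$ presumably uses the uniform topology on (set-valued) self-maps of $\mathcal{P}(\mathcal{S})$. Two things must conspire for this: $x'(m')$ must not spuriously acquire stationary distributions far from $\sigma(m')$, which is handled by the preservation of irreducibility discussed above, and the perturbation bound on $\|\sigma'(m')-\sigma(m')\|$ must hold with a constant not depending on $m'$, which is delivered by the compactness-based uniform bound on $\|M(m')^{-1}\|$. Both ingredients rely crucially on the assumption that $Q(m)$ is irreducible for \emph{every} $m\in\mathcal{P}(\mathcal{S})$, not merely at the particular stationary point.
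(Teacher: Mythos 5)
Your proposal is correct and follows essentially the same route as the paper: reduce the claim to the essentiality of $m$ as a fixed point of $x(\cdot)$, then show that $Q\mapsto x(\cdot)$ is uniformly continuous by writing $x(m')$ as the unique solution of an augmented invertible linear system (your $M(m')$ is the paper's $\tilde{Q}(m)$) and invoking the same perturbation theorem, with compactness of $\mathcal{P}(\mathcal{S})$ giving uniformity in $m'$. Your additional remarks on preservation of irreducibility and single-valuedness of $x'(\cdot)$ only make explicit what the paper leaves implicit.
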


	\begin{proof}
		Since $Q(\cdot)$ is irreducible, $x(\cdot)$ is a function, not a set-valued map.
		Moreover, it was shown in \citet{NeumannComputation} that $x(m)$ is the unique solution of $\tilde{Q}(m)x(m) = (0, \ldots, 0,1)^T$  for a sensibly defined and invertible matrix $\tilde{Q}(m)$.
		It suffices to prove that for any $\epsilon>0$ there is a $\delta>0$ such that if $Q' \in N_\delta(Q)$, then $\rho(x,x')< \epsilon$.
		Since $\tilde{Q}(m)$ is invertible, the conclusion follows as in Theorem \ref{OptSets:Preliminary} by using the perturbation theorem.
	\end{proof}
	
	We remark that this allows us to apply the known characterizations for essential fixed points (see \citet{FortEssential1950}) to our setting.
	We obtain that if $Q(m)$ is irreducible for all $m \in \mathcal{P}(\mathcal{S})$ and there is a unique stationary point, then this stationary point is essential.
	Moreover, we obtain that if $Q(m)$ is irreducible for all $m \in \mathcal{P}(\mathcal{S})$ and the set of all stationary points is totally disconnected, then there is at least one essential stationary point.

	Moreover, we can readopt the proofs of Theorem \ref{Generic:Theorem} and \ref{Uniqueness:Theorem} to prove the following statements:

	\begin{lemma}
		The set of all $Q \in \text{TR}$ for which all stationary points are essential is residual in the set $\text{TR}$. Moreover, this set lies dense in $\text{TR}$.
	\end{lemma}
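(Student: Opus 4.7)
The plan is to mimic the proofs of Theorem \ref{Generic:Theorem} and Corollary \ref{Uniqueness:Theorem} with the stationary-point correspondence $\text{SP}: \text{TR} \to 2^{\mathcal{P}(\mathcal{S})}$ defined by $\text{SP}(Q) = \{m \in \mathcal{P}(\mathcal{S}) : m^T Q(m) = 0\}$ in place of $\text{SMFE}$. Equip $\text{TR}$ with the metric $\rho(Q,Q') = \sup_{i,j \in \mathcal{S}, m \in \mathcal{P}(\mathcal{S})} |Q_{ij}(m)-Q'_{ij}(m)|$ (so that $Q_n \to Q$ means uniform convergence on $\mathcal{P}(\mathcal{S})$), and $\mathcal{P}(\mathcal{S})$ with the maximum norm. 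The map $\text{SP}$ is nonempty-valued because every conservative generator $Q(m)$ admits an invariant distribution.

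First I would prove that $\text{SP}$ is upper semicontinuous, which is the analog of Step 1 of the proof of Theorem \ref{Generic:Theorem} but considerably simpler, since no strategy component has to be controlled. Given $Q^n \to Q$ and $m^n \in \text{SP}(Q^n)$, compactness of $\mathcal{P}(\mathcal{S})$ extracts a subsequence with limit $m$; uniform convergence $Q^n \to Q$ together with continuity of $Q$ gives $Q^n(m^n) \to Q(m)$, and passing to the limit in $0 = (m^n)^T Q^n(m^n)$ yields $m^T Q(m) = 0$, so $m \in \text{SP}(Q)$. As a byproduct, $\text{SP}(Q)$ is closed, hence compact.

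Next I would show that $Q$ has all stationary points essential if and only if $Q$ is a point of continuity of $\text{SP}$, by transferring Step 2 of the proof of Theorem \ref{Generic:Theorem} verbatim. One direction follows immediately from continuity in the Hausdorff metric. For the converse, the compactness of $\text{SP}(Q)$ just established allows us to cover $\text{SP}(Q)$ by finitely many $\epsilon/2$-balls centred at essential stationary points $m^1, \ldots, m^n$, set $\delta := \min_k \delta_{m^k}$, and conclude $\text{SP}(Q) \subseteq N_\epsilon(\text{SP}(Q'))$ for all $Q' \in N_\delta(Q)$; combining with upper semicontinuity gives continuity in the Hausdorff metric at $Q$. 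Fort's theorem then yields that the set of points of continuity of $\text{SP}$ is $G_\delta$-residual in $\text{TR}$, which is exactly the set of $Q$ with all stationary points essential.

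For density I would apply Baire's theorem, which requires that $\text{TR}$ is a Baire space; this is the only point deserving a little care, since a uniform limit of Lipschitz functions need not be Lipschitz with a uniform constant. However, the same topological argument is used for $\mathcal{G}$ in the remark following Assumption \ref{assumption:continuous}, so we may treat $\text{TR}$ identically, viewing it as a closed subset of the continuous generator-valued functions and appealing to Baire's theorem as in the conclusion of Theorem \ref{Generic:Theorem}. Overall the main obstacle is really just the upper semicontinuity, and once that is secured the genericity and density arguments are clean analogs of the corresponding steps in Theorem \ref{Generic:Theorem}, with no new estimates required because neither strategies nor an optimality condition enter the picture.
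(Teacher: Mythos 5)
Your proposal matches the paper's intended argument exactly: the paper proves this lemma simply by remarking that the proofs of Theorem \ref{Generic:Theorem} and Corollary \ref{Uniqueness:Theorem} can be readopted for the stationary-point correspondence, and your adaptation (upper semicontinuity of the map $Q \mapsto \{m : m^TQ(m)=0\}$, identification of the essential $Q$ with its points of continuity, Fort's theorem, then Baire for density) is precisely that readoption, with the same caveat about Lipschitz constants that already affects the paper's treatment of $\mathcal{G}$. The only quibble is your one-line justification of nonemptiness: the existence of an invariant distribution of $Q(m)$ for each fixed $m$ does not by itself yield a solution of $m^TQ(m)=0$; one still needs a fixed-point argument (e.g.\ Kakutani applied to the correspondence $m \mapsto \{x \in \mathcal{P}(\mathcal{S}) : x^TQ(m)=0\}$, as in the existence proof of \citet{NeumannComputation}).
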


	\begin{lemma}
		Let $Q \in \text{TR}$ and let $m$ be the unique stationary point of $Q$. Then $m$ is essential. 
	\end{lemma}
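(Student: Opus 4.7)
The plan is to mirror the proof of Corollary \ref{Uniqueness:Theorem} in the simpler setting of the stationary-point correspondence $\mathrm{SP}:\mathrm{TR}\to 2^{\mathcal{P}(\mathcal{S})}$, $Q\mapsto\{m\in\mathcal{P}(\mathcal{S}):0=m^TQ(m)\}$. Since the hypothesis is that $\mathrm{SP}(Q)=\{m\}$, if I can show (i) that $\mathrm{SP}(\cdot)$ is upper semicontinuous at $Q$ and (ii) that $\mathrm{SP}(Q')\neq\emptyset$ for every $Q'\in\mathrm{TR}$, then given $\epsilon>0$ upper semicontinuity yields $\delta>0$ with $\mathrm{SP}(Q')\subseteq N_\epsilon(\mathrm{SP}(Q))=N_\epsilon(m)$ for $Q'\in N_\delta(Q)$, and non-emptiness guarantees a point of $\mathrm{SP}(Q')$ inside $N_\epsilon(m)$, which is exactly essentiality.

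For step (i) I would argue as in Step 1 of Theorem \ref{Generic:Theorem}, but without the strategy component: assume for contradiction that essentiality fails, pick sequences $Q^n\to Q$ and $m^n\in \mathrm{SP}(Q^n)$ with $m^n\notin N_\epsilon(\mathrm{SP}(Q))$, extract a subsequence convergent to some $m^\ast\in\mathcal{P}(\mathcal{S})$ (this is possible by compactness of the simplex), and pass to the limit in $0=(m^n)^TQ^n(m^n)$. The uniform bound $d(Q^n,Q)\to 0$ together with Lipschitz continuity of $Q(\cdot)$ gives $Q^n(m^n)\to Q(m^\ast)$ entrywise, so $0=(m^\ast)^TQ(m^\ast)$ and therefore $m^\ast\in\mathrm{SP}(Q)$, contradicting $m^n\notin N_\epsilon(\mathrm{SP}(Q))$. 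Step (ii) follows from the existence argument underlying Theorem 4.1 of \citet{NeumannComputation}: for any fixed $m$ the conservative generator $Q'(m)$ admits an invariant distribution, and a Kakutani/Brouwer-type fixed-point argument applied to the map $m\mapsto\{x\in\mathcal{P}(\mathcal{S}):x^TQ'(m)=0\}$ produces a stationary point.

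I do not expect any serious obstacle: the only subtlety is checking that the fixed-point existence used in \citet{NeumannComputation} for stationary mean field equilibria carries over cleanly to the purely Markovian setting of $\mathrm{TR}$, which it does since that existence proof factors through precisely this intermediate stationary-point step. Once (i) and (ii) are in place the conclusion is a one-line application of upper semicontinuity at the singleton $\{m\}$, so the overall argument is essentially a verbatim transcription of Corollary \ref{Uniqueness:Theorem} with $\mathcal{G}$, $\mathrm{SMFE}$ replaced by $\mathrm{TR}$, $\mathrm{SP}$.
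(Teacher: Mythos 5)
Your proposal is correct and follows exactly the route the paper intends: it explicitly states that this lemma is obtained by readopting the proofs of Theorem \ref{Generic:Theorem} and Corollary \ref{Uniqueness:Theorem}, which is precisely your combination of upper semicontinuity of the stationary-point correspondence (via the compactness/subsequence argument) with non-emptiness of $\mathrm{SP}(Q')$ for every perturbed $Q'$. No gaps; the limit-passing step and the Kakutani-based existence step both go through as you describe.
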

	
	With these preparations we can state and prove the second main result:
	
	\begin{theorem}
		\label{Chracterization}
		Let $(Q,r) \in \mathcal{G}$ and let $(m, d)$ be an equilibrium of $(Q,r)$ such that $d \in D^s$ is the unique equilibrium strategy given $m$ and such that $m$ is an essential stationary point given $Q(\cdot)$.
		Then $(m,d)$ is essential.
	\end{theorem}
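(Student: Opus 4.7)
The plan is to combine the local optimality persistence from Lemma~\ref{comparative:OptimalitySetsSmallPerturbationPositive} with the essentiality of $m$ as a stationary point of $Q^d(\cdot) \in \text{TR}$. Given $\epsilon > 0$, I would produce a $\delta > 0$ such that every perturbed game within $N_\delta(Q, r)$ admits an equilibrium of the form $(m', d)$ with $\|m' - m\| < \epsilon$; since the strategy component is left unchanged, the resulting pair automatically lies in the $\epsilon$-neighbourhood of $(m, d)$ under the maximum norm on $\mathcal{P}(\mathcal{S}) \times \mathcal{P}(\mathcal{A})^S$.

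First, I would invoke Lemma~\ref{comparative:OptimalitySetsSmallPerturbationPositive}. Because $\mathcal{D}(m) = \{d\}$, it yields some $\epsilon_0 > 0$ and an associated $\delta_0 > 0$ such that for every $(Q', r') \in N_{\delta_0}(Q, r)$ and every $m' \in N_{\epsilon_0}(m)$ the unique optimal deterministic strategy in the perturbed game at $m'$ is still $d$. Shrinking $\epsilon$ if necessary, I may assume $\epsilon \le \epsilon_0$.

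Second, I would use that $m$ is an essential stationary point of $Q^d(\cdot) \in \text{TR}$ to obtain $\tilde{\delta} > 0$ such that every $\tilde{Q} \in N_{\tilde{\delta}}(Q^d)$ possesses a stationary point in $N_\epsilon(m)$. The bridge between the game metric $d(\cdot,\cdot)$ and the metric on $\text{TR}$ is the elementary observation that the restriction map $(Q, r) \mapsto Q^d$ is non-expansive: the sup-distance between $Q^d$ and $(Q')^d$ is dominated by the sup-distance between the full transition-rate tensors, since restriction only selects one action per state. Setting $\delta := \min(\delta_0, \tilde{\delta})$ and taking any $(Q', r') \in N_\delta(Q, r)$, I obtain some $m' \in N_\epsilon(m)$ with $0 = (m')^T (Q')^d(m')$, so $m'$ is invariant under the Markov chain generated by $d$ in the perturbed game. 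Since $m' \in N_{\epsilon_0}(m)$ and $(Q', r') \in N_{\delta_0}(Q, r)$, the first step forces $\mathcal{D}'(m') = \{d\}$; combined with the characterisation of optimal strategies as convex combinations of optimal deterministic ones from \citet[Theorem 3.2]{NeumannComputation}, this makes $(m', d)$ a bona fide equilibrium of $(Q', r')$ in the desired neighbourhood.

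The main conceptual point, which I do not expect to be a serious technical obstacle but which is the heart of the argument, is precisely the one flagged in the discussion preceding the theorem: under perturbations the nonlinear equations $0 = m^T Q^\pi(m)$ and $0 = (m')^T (Q')^{\pi'}(m')$ governing invariant distributions for different strategies $\pi \neq \pi'$ do not relate, so one cannot freely perturb both components simultaneously. Here this difficulty evaporates because $\pi = d$ is the \emph{unique} deterministic optimiser at $m$, allowing Lemma~\ref{comparative:OptimalitySetsSmallPerturbationPositive} to freeze the strategy component across all small perturbations and defer all motion to the population distribution, where the essential-stationary-point hypothesis takes over.
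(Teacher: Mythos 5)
Your proposal is correct and follows essentially the same route as the paper: Lemma~\ref{comparative:OptimalitySetsSmallPerturbationPositive} freezes the optimal strategy at $d$ on a neighbourhood of $m$ uniformly over small perturbations, the essential-stationary-point hypothesis supplies a nearby stationary point of $(Q')^d$, and the minimum of the two $\delta$'s does the job. Your additional observations --- that the restriction map $(Q,r)\mapsto Q^d$ is non-expansive between the two metrics, and that the pair $(m',d)$ is then verified to be an equilibrium via the convex-combination characterisation of optimal strategies --- are details the paper's proof leaves implicit, but they do not change the argument.
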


	\begin{proof}
		Since $d$ is the unique optimal strategy given $m$ there is an $\bar{\epsilon}$ such that for all $m' \in \overline{N_{\bar{\epsilon}}(m)}$ we have $\mathcal{D}(m')=\{d\}$. 
		Let $\epsilon \in (0,\bar{\epsilon})$.
		By Lemma  \ref{comparative:OptimalitySetsSmallPerturbationPositive}, we find a $\delta_1>0$ such that for any game $(Q',r') \in N_{\delta_1}(m)$ and all $m' \in N_\epsilon(m)$ we have $\mathcal{D}'(m')=\{d\}$.
		Since $m$ is an essential stationary point of the dynamics, there is a $\delta_2>0$ such that for any $(Q',r') \in N_{\delta_2}(Q,r)$ there is a stationary point of the dynamics given $(Q')^d(\cdot)$ in $N_\epsilon(m)$.		
		Choosing $\delta= \min \{\delta_1, \delta_2\}$ yields the desired result.
	\end{proof}
	
	We highlight that Theorem \ref{Chracterization} indeed yields different conclusions than Theorem \ref{Uniqueness:Theorem}. 
	The first result allowed to characterize any equilibrium (also a mixed strategy equilibrium) as an essential equilibrium whenever it is the unique equilibrium of the game.
	This second result now yields that any deterministic equilibrium is essential whenever the deterministic equilibrium strategy is the unique optimal strategy for the equilibrium distribution and the equilibrium distribution is an essential stationary point of $Q^d$.
	In particular, Theorem \ref{Chracterization} yields that several deterministic equilibria $(m^{d^i}, d^i)_{i=1}^n$ each with a different equilibrium strategy are essential whenever the points $m^{d^i}$ are the unique stationary point of $Q^{d^i}$.
	
	We conclude by describing the use of the characterization results in several examples:
	In the consumer choice model discussed in \citet{NeumannComputation} Theorem \ref{Chracterization} allows to characterize all deterministic stationary equilibria as essential whenever we are outside the knife-edge cases $k_1 \in \{ \frac{\epsilon}{b+\epsilon}, \frac{1}{2}\}$ or $k_2 \in \{ \frac{1}{2}, \frac{b}{b+\epsilon} \}$. 
	In the consumer choice model with congestion effects discussed in \citet{NeumannDissertation} we obtain using Theorem \ref{Uniqueness:Theorem} that if there is a unique equilibrium then it is essential.
	Also if we consider the botnet defence model of \citet{KolokoltsovCorruption2017} in the discounted cost formulation we obtain that all deterministic equilibria such that for the equilibrium distribution only one strategy is optimal are essential.

	\bibliographystyle{plainnat}
	\bibliography{literatureStability}

\end{document}